\newtheorem{theorem}{Theorem}[section]
\newtheorem{proposition}[theorem]{Proposition}
\theoremstyle{definition}
\newtheorem{example}[theorem]{Example}
\newtheorem{corollary}[theorem]{Corollary}
\theoremstyle{remark}
\newtheorem{remark}[theorem]{Remark}
\def\sym{\text{Sym}}
\def\trace{\text{Trace}}
\def\R{\mathbb R}
 \DeclareMathOperator*{\argmin}{arg\,min}
\numberwithin{equation}{section}
\begin{document}

\title[ Omnibus CLTs for Fr\'echet means]{Omnibus CLTs for Fr\'echet means and nonparametric inference on non-Euclidean spaces}


\author{Rabi Bhattacharya}
\address{Department of Mathematics, The University of Arizona, Tucson, AZ, 85721}
\curraddr{}
\email{rabi@math.arizona.edu}

\author{Lizhen Lin}
\address{Department of Statistics and Data Sciences, The University of Texas at Austin, Austin, TX, 78712}
\curraddr{}
\email{lizhen.lin@austin.utexas.edu}
\thanks{}

\subjclass[2010]{60F05, 62E20, 60E05, 62G20 }

\date{}

\dedicatory{}

\commby{Mark M. Meerschaert}

\begin{abstract}
  Two central  limit theorems for sample Fr\'echet means are derived, both significant for nonparametric inference on non-Euclidean spaces.  The first one, Theorem \ref{th-clt}, encompasses and improves upon most earlier CLTs on Fr\'echet means and broadens the scope of the methodology beyond manifolds to diverse new non-Euclidean data including those on certain stratified spaces which are important in the study of phylogenetic trees.  It does not require that the underlying distribution $Q$ have a density, and applies to both intrinsic and extrinsic analysis.  The second theorem, Theorem \ref{th-2.4}, focuses on intrinsic means on Riemannian manifolds of dimensions $d>2$ and breaks new ground by providing a broad CLT without any of the earlier restrictive support assumptions.  It makes the statistically reasonable assumption of a somewhat smooth density of $Q$.  The excluded case of dimension $d=2 $ proves to be an enigma, although the first theorem does provide a CLT in this case as well under a support restriction. Theorem \ref{th-2.4} immediately applies to spheres $S^d$, $d>2$, which are also of  considerable importance in applications to axial spaces and to landmarks based image analysis, as these spaces are quotients of spheres under a Lie group $\mathcal G $ of isometries of $S^d$.  
  
  \textbf{Keywords}: Inference on manifolds; Fr\'echet means; Omnibus central limit theorem; Stratified spaces; 
\end{abstract}


\maketitle

\section{Introduction}

The present article focuses on the nonparametric, or model independent, statistical analysis of manifold-valued and other non-Euclidean data that arise in many areas of science and technology.  The basic idea is to use means for comparisons among distributions, as one does with Euclidean data.  On a metric space $(S, \rho)$ there is a notion of the mean $\mu$ of a distribution $Q$,  perhaps first formulated in detail in \cite{frech}, as the minimizer of the expected squared distance from a point,
 \begin{equation}
\label{eq-frechetf}
    \mu = \argmin_{p} \int \rho^2(p, q) Q(dq),
    \end{equation}
assuming the integral is finite (for some $p$) and the minimizer is unique, in which case one  says that the \emph{Fr\'echet mean of $Q$ exists}.  This $\mu$ is called the \emph{Fr\'echet mean of $Q$}.  In general, the set of minimizers is called the \emph{Fr\'echet mean set} of $Q$, denoted $C_Q$.   It turns out that uniqueness is crucial for making comparisons among distributions. Usually the minimizer is unique under relatively minor restrictions, if the distance $\rho$ is the Euclidean distance inherited by the embedding $J$ of a $d$-dimensional manifold $M$ in an Euclidean space  $E^N$, such that $J(M)$ is closed.  Indeed, under the relabeling of $M$ by $J(M)$,  the Fr\'echet mean set in this case is given by
\begin{equation}
\argmin_{p \in J(M)} \|p-m(Q\circ J^{-1})\|^2,
\label{eq-frechet2}   
\end{equation}
where $\|x\|$ is the Euclidean norm on $E^N$ and $m(Q \circ J^{-1})$ is the usual Euclidean mean of the induced distribution $Q\circ J^{-1}$ on $E^N$.  Thus the minimizer is unique if and only if the projection of the Euclidean mean on the image $J(M)$ of $M$ is unique, in which case it is called an \emph{extrinsic mean}.    On the other hand, if $\rho_g$ is the geodesic distance on a Riemannian manifold $M$ with metric tensor $g$ having positive sectional curvature (in some region of $ M$), then conditions for uniqueness are known only for $Q$ with support in a relatively small geodesic ball   \cite{afsari11, karcher, kenws}, which is too restrictive an assumption from the point of view of statistical applications.    If the Fr\'echet mean exists under $\rho_g$ it is called  the \emph{intrinsic mean}. A complete characterization of uniqueness of \eqref{eq-frechetf} for $\rho=\rho_g$ on the circle $S^1$ for probabilities $Q$ with a continuous density (\cite{rabinotes}, \cite{rabibook}) indicates that the intrinsic mean exists broadly, without any support restrictions, if $Q$ has a smooth density. 

      An important question that arises in the use of  Fr\'echet means in nonparametric statistics is the choice of the distance $\rho$ on $M$. There are in general uncountably many embeddings $J$ and metric tensors $g$ on a manifold $M$. For intrinsic analysis there are often natural choices for the metric tensor $g$.  A good choice for extrinsic analysis is to  find an embedding $J$: $M\rightarrow E^N$ with $J(M)$ closed, which is \emph{equivariant} under a large Lie group $\mathcal G$ of actions on $M$. This means that there is a homomorphism  $g\rightarrow \Phi_g$ on $\mathcal G$ into the general linear group $GL(N, \R)$ such that $J\circ g = \Phi_g\circ J$ $\forall g \in \mathcal G.$ Such embeddings and extrinsic means under them have been derived for Kendall type shape spaces in \cite{rabivic03}, \cite{rabivic05}, \cite{vic05}, \cite{ananda},  \cite{dryen2}, and \cite{abs2}.  In most data examples that have been analyzed, using a natural metric tensor $g $ and an equivariant $J$ under a large group $\mathcal G$, the sample intrinsic and extrinsic means are virtually indistinguishable and the inference based on the two different methodologies yield almost identical results \cite{rabibook}. This provides an affirmation of good choices of distances.  It also strongly suggests that the intrinsic mean is unique in many-perhaps most-statistical applications.

     Our focus in this article is to provide the asymptotic distribution theory which is the basis of nonparametric inference based on Fr\'echet means. The omnibus CLT Theorem \ref{th-clt} implies earlier results on CLT's  and, in particular  extends them to certain stratified spaces. Unfortunately, for the intrinsic CLT a support condition is still needed for the theorem to apply. In Section 3 we remove these support conditions for CLT's on $S^d$, $d>2$, assuming statistically reasonable smooth densities. The implications of these results for axial spaces and Kendall's shape spaces,etc, are indicated. 
     
     Finally, it is important to distinguish the intrinsic mean on a Riemannian manifold $(M,g)$ from the \emph{Karcher mean} of $Q$ which minimizes the Fr\'echet function restricted to an open set $S$ containing the support of $Q$.


\section{An omnibus CLT for the Fr\'echet mean}
\label{sec-2}

Let $(S, \rho)$ be a metric space and $Q$  a probability measure on its Borel $\sigma$-field.  Define the \emph{Fr\'echet function} of $Q$ as
\begin{equation}
\label{eq-frechet}
	F(p) = \int \rho^2(p,q) Q(dq) \;  (p \in S).
\end{equation}
Assume that $F$ is finite on $S$ and has a unique minimizer $\mu = \argmin_p F(p)$. Then $\mu$ is called the \emph{Fr\'echet mean} of $Q$ (with respect to the distance $\rho$).  Under broad conditions, the Fr\'echet  sample mean $\mu_n$ of the empirical distribution $Q_n= \dfrac{1}{n}\sum_{j=1}^n \delta_{Y_j}$ based on independent $S$-valued random variables $Y_j$ ($j=1,\ldots, n$) with common distribution $Q$ is a consistent estimator of $\mu$. That is, $\mu_n\rightarrow \mu$ almost surely, as $n\rightarrow \infty$.  Here $\mu_n$ may be taken to be any measurable selection from the (random) set of minimizers of the Fr\'echet  function of $Q_n$, namely, $F_n(p) = \dfrac{1}{n}\sum_{j=1}^n  \rho^2(p,Y_j)$ (See \cite{ziezold}, \cite{rabivic03}, \cite{rabivic05} and \cite{rabibook}).

We make the following assumptions.

\begin{itemize}
\item [(A1)] (Uniqueness of $\mu$) The Fr\'echet mean $\mu$ of $Q$ is unique.

\item [(A2)] $\mu\in G$,  where $G$ is a measurable subset of $S$, and there is a homeomorphism $\phi : G\rightarrow U$, where $U$ is  an open subset of $\mathbb R^s$ for some $s\geq 1$ and $G$  is given its relative topology on $S$.  Also,
\begin{equation}
\label{eq-asa2}
x\mapsto h(x;q) : = \rho^2(\phi^{-1}(x),q)\;
\end{equation}
is twice continuously differentiable on $U$, for every $q$  outside a $Q$-null set.
\item[(A3)]  $P(\mu_n\in G)\rightarrow 1$ as $n\rightarrow \infty$.

\item[(A4)]  Let $D_rh(x;q) = \partial h(x;q)/\partial x_r, $  $D_{r, r'}=D_rD_{r'}$,  $1\leq r, r' \leq s$. Then
\begin{equation}
	E | D_rh (\phi(\mu);Y_1)|^2<\infty, \; E |D_{r,r^\prime} h(\phi(\mu);Y_1)|<\infty \;\text{for}\; r,r^\prime =1,\ldots, s.
\end{equation}
\item [(A5)] (Locally uniform $L^1$-smoothness of the Hessian) Let $u_{r,r^\prime}(\epsilon;q)= \sup\{| D_{r,r^\prime} h(\theta;q) - D_{r,r^\prime} h(\phi (\mu);q)| : |\theta -\phi(\mu)|<\epsilon\}$.  Then
\begin{equation}
	E |u_{r,r^\prime}(\epsilon;Y_1)|\rightarrow 0 \;\text{as}\; \epsilon\rightarrow 0\;\text{for all}\; 1\leq r, r^\prime\leq s.
\end{equation}
\item [(A6)]  (Nonsingularity of the Hessian) The matrix   $\Lambda= [E D_{r,r^\prime} h(\phi(\mu);Y_1)]_{r,r^\prime=1,\ldots,s }$ is nonsingular.
\end{itemize}

\begin{remark}
\label{rem-2.1}
Observe that $Eh(x, Y_1)=F(\phi^{-1}(x))=ED_rh(x,Y_1)=D_rF(\phi^{-1}(x))$, $1\leq r\leq s$, $x\in U$. Also, $ED_rh(\phi (\mu), Y_1)=D_rF(\phi^{-1}(x))\mid_{x=\phi(\mu)}=0$, $1\leq r\leq s$, since $F(\phi^{-1}(x))$ attains a minimum at $x=\phi(\mu)$.
\end{remark}

\begin{theorem}
\label{th-clt}
Under assumptions (A1)-(A6) ,
\begin{equation}
\label{eq-maineq}
	n^{1/2}[ \phi(\mu_n)-\phi(\mu)] \xrightarrow{\mathcal{L}}  N(0, \Lambda^{-1}C \Lambda^{-1}), \;\text{as}\;  n\rightarrow \infty,
\end{equation}
where  $C $ is the covariance matrix of $\{D_rh(\phi(\mu);Y_1), r=1,\ldots,s\}$.
\end{theorem}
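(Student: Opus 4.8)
The plan is to read Theorem~\ref{th-clt} as an instance of the classical $M$-estimation argument, carried out in the chart $U\subset\mathbb R^s$. Write $\theta_0=\phi(\mu)$, $\theta_n=\phi(\mu_n)$, $f(x)=F(\phi^{-1}(x))=E\,h(x;Y_1)$ and $f_n(x)=\tfrac1n\sum_{j=1}^n h(x;Y_j)$ for $x\in U$. First I would establish the two first-order conditions. For the sample one, by (A2) each $h(\,\cdot\,;Y_j)$ is a.s.\ $C^2$ on $U$, hence so is $f_n$; by (A3), $\theta_n$ is an interior minimizer of $f_n$ for all large $n$, so $\nabla f_n(\theta_n)=0$, i.e.\ $\tfrac1n\sum_j D_r h(\theta_n;Y_j)=0$ for $r=1,\dots,s$, almost surely eventually. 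For the population one, the mean value theorem and (A5) give, for $|\theta-\theta_0|<\epsilon$,
\[
|D_r h(\theta;q)-D_r h(\theta_0;q)|\ \le\ \epsilon\sum_{r'=1}^s\bigl(|D_{r,r'}h(\theta_0;q)|+u_{r,r'}(\epsilon;q)\bigr),
\]
whose right-hand side is $Q$-integrable by (A4)--(A5); thus $\nabla_x h(\,\cdot\,;q)$ has a local integrable envelope near $\theta_0$, differentiation under the integral sign is legitimate, and since $U$ is open and $\theta_0$ minimizes $f$ over $U$ (as $\mu$ minimizes $F$, $\mu\in G$, and $\phi$ is a homeomorphism) we get $E\,D_r h(\theta_0;Y_1)=\nabla f(\theta_0)=0$.

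Next I would Taylor-expand the estimating equation about $\theta_0$. Using consistency, $\theta_n\to\theta_0$ almost surely (valid in this setting; see the discussion preceding the theorem and \cite{rabivic03}), so for large $n$ the whole segment $[\theta_0,\theta_n]$ lies in $U$ and the exact first-order Taylor formula applied componentwise gives
\[
0=\nabla f_n(\theta_n)=\nabla f_n(\theta_0)+\bar\Lambda_n\,(\theta_n-\theta_0),\qquad
\bar\Lambda_n=\Bigl[\,\tfrac1n\sum_{j=1}^n\int_0^1 D_{r,r'}h\bigl(\theta_0+t(\theta_n-\theta_0);Y_j\bigr)\,dt\,\Bigr]_{r,r'=1,\dots,s}.
\]
The key technical step is then $\bar\Lambda_n\to\Lambda$ in probability: the ``anchored'' average $\tfrac1n\sum_j D_{r,r'}h(\theta_0;Y_j)\to\Lambda_{r,r'}$ almost surely by the strong law (using $E|D_{r,r'}h(\theta_0;Y_1)|<\infty$), while on the event $\{|\theta_n-\theta_0|<\epsilon\}$---whose probability tends to $1$ for each fixed $\epsilon>0$---the difference $\bar\Lambda_n-[\tfrac1n\sum_j D_{r,r'}h(\theta_0;Y_j)]$ is bounded entrywise by $\tfrac1n\sum_j u_{r,r'}(\epsilon;Y_j)$, and $\tfrac1n\sum_j u_{r,r'}(\epsilon;Y_j)\to E\,u_{r,r'}(\epsilon;Y_1)$, which (A5) makes arbitrarily small by choosing $\epsilon$ small.

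To finish, since $E\,D_r h(\theta_0;Y_1)=0$ and $E|D_r h(\theta_0;Y_1)|^2<\infty$ (A4), the classical multivariate CLT gives $n^{1/2}\nabla f_n(\theta_0)=n^{-1/2}\sum_{j=1}^n\nabla_x h(\theta_0;Y_j)\xrightarrow{\mathcal L}N(0,C)$ with $C$ the covariance matrix of $\nabla_x h(\theta_0;Y_1)$. By (A6) $\Lambda$ is invertible, hence $\bar\Lambda_n$ is invertible with probability tending to $1$, and on that event $n^{1/2}(\theta_n-\theta_0)=-\bar\Lambda_n^{-1}\,n^{1/2}\nabla f_n(\theta_0)$. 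Slutsky's theorem together with $\bar\Lambda_n^{-1}\to\Lambda^{-1}$ in probability (continuous mapping at an invertible matrix) then yields $n^{1/2}[\phi(\mu_n)-\phi(\mu)]\xrightarrow{\mathcal L}-\Lambda^{-1}N(0,C)=N(0,\Lambda^{-1}C\Lambda^{-1})$, where we used that $\Lambda$ is symmetric (equality of the mixed partials of the $C^2$ function $h(\,\cdot\,;q)$).

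I expect the main obstacle to be the uniform control of the Hessian at the random intermediate point $\theta_0+t(\theta_n-\theta_0)$ rather than at $\theta_0$ itself: this is exactly what assumption (A5) is designed for, since it bounds the local oscillation of the second derivatives by an integrable envelope and lets the strong law be applied uniformly on a shrinking neighborhood of $\theta_0$. This is also the hypothesis that lets the present result weaken the smoothness and integrability requirements of the earlier intrinsic-mean CLT of \cite{rabivic05}. A secondary, but easily dispatched, subtlety is the differentiation under the integral sign needed for $E\,D_r h(\theta_0;Y_1)=0$, which is handled by the same local-domination bound.
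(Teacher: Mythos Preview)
Your proposal is correct and follows essentially the same $M$-estimation route as the paper: first-order condition at $\theta_n=\phi(\mu_n)$, Taylor expansion in the chart, convergence of the random Hessian $\bar\Lambda_n\to\Lambda$ via (A5) and the law of large numbers, then the classical CLT together with Slutsky. The only cosmetic differences are that you use the integral form of the remainder rather than the paper's mean-value points $\theta_{n,r,r'}$, and that you explicitly justify the centering $E\,D_r h(\theta_0;Y_1)=0$ by differentiation under the integral sign (the paper uses this tacitly when applying the classical CLT to $n^{-1/2}\sum_j\grad h(\nu;Y_j)$).
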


\begin{proof}
 The function $x\rightarrow F_n(\phi^{-1}x) = \dfrac{1}{n}\sum_{j=1}^n h(x,Y_j)$ on $U$ attains a minimum at  $\phi(\mu_n) \in U$ for all sufficiently large $n$ (almost surely).  For all such $n$ one therefore has the first order condition
\begin{equation}
\label{eq-firstorder}
\nabla \; F_n(\phi^{-1} \nu_n) = \dfrac{1}{n} \sum_{j=1}^n \nabla \; h(\nu_n,Y_j)  =0,
\end{equation}
where $\nu= \phi( \mu)$, $\nu_n= \phi( \mu_n)$ (column vectors in $U$). Here $\nabla$ is the gradient $(D_1,\ldots, D_r).$  A Taylor expansion yields
\begin{equation}
\label{eq-07}
	0 =  \dfrac{1}{n}\sum_{j=1}^n \nabla \;h(\nu_n,Y_j)  = \dfrac{1}{n}\sum_{j=1}^n \nabla \;h(\nu,Y_j)  +  \Lambda_n (\nu_n-\nu)
\end{equation}
where $\Lambda_n$  is the $s\times s$ matrix given by
\begin{equation}
\label{eq-08}
	\Lambda_n = \dfrac{1}{n}\sum_{j=1}^n [D_{r,r^\prime}h(\theta_{n,r,r^\prime},Y_j)]_{r,r^\prime=1,\ldots,s},
\end{equation}
and $\theta_{n, r, r^\prime}$ lies on the line segment joining $\nu_n$ and $\nu$.  We will show that
\begin{equation}
\label{eq-09}
	\Lambda_n \rightarrow \Lambda \;\text{in probability},\; \text{ as } n\rightarrow \infty.
\end{equation}
Fix  $r,r^\prime\in \{1,\ldots,s\}$.  For $\delta>0$, write $E u_{r,r^\prime} (\delta,Y_1) = \gamma(\delta)$. There exists $ n=n(\delta)$ such that  $ P(|\nu_n-\nu|>\delta)<\delta$ for $n>n(\delta).$  Now
\begin{align}
	E\big| [ \dfrac{1}{n}\sum_{j=1}^n D_{r,r^\prime}h(\nu_n,Y_j)- \dfrac{1}{n}\sum_{j=1}^n D_{r,r^\prime}h(\nu,Y_j)]\cdot 1_{[|\nu_n-\nu|\leq \delta]}\big|&\nonumber\leq E  \dfrac{1}{n}\sum_{j=1}^n u_{r,r^\prime} (\delta,Y_j)\\ \nonumber
 &= E u_{r,r^\prime} (\delta,Y_1) = \gamma(\delta) \rightarrow 0
\end{align}
 as  $\delta\rightarrow 0$.
Hence, by Chebyshev's inequality for first moments, for $n> n(\delta) $ one has for every $\epsilon>0$,
\begin{equation}
\label{eq-11}
	P(\big |  \dfrac{1}{n}\sum_{j=1}^n D_{r,r^\prime}h(\nu_n,Y_j)- \dfrac{1}{n}\sum_{j=1}^n D_{r,r^\prime}h(\nu,Y_j)\big|> \epsilon) \leq  \delta + \gamma(\delta)/\epsilon \rightarrow 0\;\text{ as}\;  \delta\rightarrow 0.
\end{equation}
This shows that
\begin{equation}
\label{eq-12}
	\big[ \dfrac{1}{n}\sum_{j=1}^n D_{r,r^\prime} h(\nu_n,Y_j)-  \dfrac{1}{n}\sum_{j=1}^n D_{r,r^\prime}h(\nu,Y_j)\big] \rightarrow 0;\text{ in probability as}\; n\rightarrow \infty.
\end{equation}
Next, by the strong law of large numbers,
\begin{equation}
\label{eq-13}
	 \dfrac{1}{n}\sum_{j=1}^n  D_{r,r^\prime}h(\nu,Y_j) \rightarrow  ED_{r,r^\prime}h(\nu,Y_1)\;\text{ almost surely,}\;\text{ as}\; n\rightarrow \infty.
\end{equation}
Since \eqref{eq-11} -- \eqref{eq-13} hold for all $r$,$r^\prime$, \eqref{eq-09} follows.  The set of symmetric $s\times s$ positive definite matrices is open in the set of all $s\times s$ symmetric matrices, so that \eqref{eq-09} implies that $\Lambda_n$ is nonsingular with probability going to 1 and  $\Lambda_n^{-1}\rightarrow \Lambda^{-1}$ in probability, as $n\rightarrow \infty$.  Note that $E\nabla h(\nu, Y_1)=0$ (see Remark \ref{rem-2.1}). Therefore, using (A4),  by the classical CLT and Slutsky's  Lemma, \eqref{eq-07} leads to
\begin{equation}
\label{eq-14}	\sqrt{n}(\nu_n-\nu) = \Lambda_n^{-1}[-(1/\sqrt{n})  \dfrac{1}{n}\sum_{j=1}^n \nabla\; h(\nu,Y_j)]  \xrightarrow{\mathcal{L}} N(0, \Lambda^{-1}C\Lambda^{-1}),
\end{equation}
as $ n\rightarrow \infty$.
\end{proof}

A preliminary version of Theorem \ref{th-clt} was presented in \cite{2013arXiv1306.5806B}.

\begin{corollary}[CLT for Intrinsic Means-I]
 \label{coro-1}  Let $(M,g)$ be a $d$-dimensional complete Riemannian manifold with metric tensor $g$ and geodesic distance $\rho_g$. Suppose  $Q$ is a probability measure on $M $ with intrinsic mean $\mu_I$, and that $Q$ assigns zero mass to a neighborhood, however small, of the \emph{cut locus} of $\mu_I$.  Let $\phi= \exp \mu_I^{-1}$ be the inverse exponential, or $\log$-, function at $\mu_I$ defined on a neighborhood $G$ of $\mu = \mu_I$ onto its image $U$ in the tangent space $T_{\mu_I} (M)$. Assume that the assumptions (A4)-(A6) hold. Then, with $s=d$, the CLT  \eqref{eq-maineq} holds for the intrinsic sample mean $\mu_n = \mu_{n,I}$, say.
\end{corollary}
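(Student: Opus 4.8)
The plan is to obtain the corollary by checking that the hypotheses (A1)--(A6) of Theorem \ref{th-clt} hold in this Riemannian setting, with $s = d$, $\phi = \Exp_{\mu_I}^{-1}$ and $\rho = \rho_g$, and then quoting that theorem. Several of the assumptions cost nothing. (A1) is precisely the standing hypothesis that $Q$ has a unique intrinsic mean. (A3) follows from strong consistency: since $(M,\rho_g)$ is complete, closed bounded subsets of $M$ are compact by Hopf--Rinow, so the inclusion \eqref{eq-cset} applies and $\mu_{n,I}\to\mu_I$ almost surely; as $G$ is a neighborhood of $\mu_I$, this gives $\mu_{n,I}\in G$ for all large $n$, almost surely. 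Assumptions (A4)--(A6) are imposed in the statement; I would only remark that the first bound in (A4) is then automatic, because (A2) forces $Q(C(\mu_I))=0$ and, off the cut locus, the Gauss lemma gives $D_r h(\phi(\mu_I);q) = -2\,(\Exp_{\mu_I}^{-1}q)_r$ in the $\log$-chart, whence $E|D_r h(\phi(\mu_I);Y_1)|^2 \le c\,E\rho_g^2(\mu_I,Y_1) = c\,F(\mu_I) < \infty$. The same computation exhibits $C$ in \eqref{eq-maineq} as a multiple of the covariance of the tangent vector $\Exp_{\mu_I}^{-1}(Y_1)$ and $\Lambda$ as the expected Hessian of $\rho_g^2(\cdot,Y_1)$ at $\mu_I$, recovering the form of the CLT of \cite{rabivic05}.

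The substantive point is (A2). That $\phi = \Exp_{\mu_I}^{-1}$ is a homeomorphism, indeed a diffeomorphism, from a normal neighborhood $G$ of $\mu_I$ onto an open subset $U$ of $T_{\mu_I}(M)\cong\mathbb R^d$ is standard Riemannian geometry. What must be arranged is that, for $Q$-almost every $q$, the pulled-back map $x\mapsto h(x;q) = \rho_g^2(\Exp_{\mu_I}(x),q)$ is twice continuously differentiable on $U$. Since $p\mapsto\rho_g^2(p,q)$ is $C^\infty$ exactly off the cut locus $C(q)$ of $q$, this amounts to choosing $G$ small enough that $G\cap C(q)=\emptyset$ for $Q$-a.e.\ $q$. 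I would take $G = B_g(\mu_I,\delta)$ and use the (implicitly assumed) fact that the support of $Q$ lies in a geodesic ball $B_g(\mu_I,r)$ with $\delta + r$ less than the injectivity radius over a compact neighborhood of $\mu_I$: then for $q\in\mathrm{supp}(Q)$ and $p\in G$ one has $\rho_g(p,q) < \delta + r$, so $p$ and $q$ are joined by a unique minimizing geodesic free of conjugate points and $\rho_g^2(\cdot,q)$ is smooth at $p$. With (A1)--(A6) in hand, Theorem \ref{th-clt} applies verbatim and yields \eqref{eq-maineq} for $\mu_{n,I}$.

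I expect the cut-locus issue in (A2) to be the main obstacle, and the point is subtler than it looks: one needs a \emph{single} neighborhood $G$ of $\mu_I$ on which $\rho_g^2(\cdot,q)$ is $C^2$ simultaneously for $Q$-a.e.\ $q$. A $q$-dependent neighborhood will not do, because in the proof of Theorem \ref{th-clt} the second-order Taylor expansion leading to \eqref{eq-08} is taken at the random points $\theta_{n,r,r'}$ for every one of the data points $Y_1,\dots,Y_n$, and the intersection over $j\le n$ of the neighborhoods $\{x:\Exp_{\mu_I}(x)\notin C(Y_j)\}$ can shrink toward the single point $\phi(\mu_I)$ as $n$ grows, while we have no rate for $\phi(\mu_{n,I})\to\phi(\mu_I)$ at this stage. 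Hence the weaker condition $Q(C(\mu_I))=0$ is not quite enough; a support hypothesis, or a positive-distance-to-cut-locus hypothesis, of the sort indicated above is the natural and essentially necessary ingredient for (A2).
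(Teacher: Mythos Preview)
Your approach—verify (A1)–(A6) and invoke Theorem~\ref{th-clt}—is exactly the paper's: the corollary is presented as an ``immediate consequence'' with no separate argument. Your checks of (A1) and (A3) are correct, and the side remark that the first-derivative bound in (A4) comes for free via the Gauss lemma is a nice observation the paper does not make.

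Where you go beyond the paper is in worrying about (A2), and the concern is genuine: (A2) demands a \emph{single} normal neighborhood $G$ of $\mu_I$ on which $p\mapsto\rho_g^2(p,q)$ is $C^2$ simultaneously for $Q$-a.e.\ $q$, and on a general manifold this is not automatic. In particular, as you note, (A2) already forces $Q(C(\mu_I))=0$, which sits awkwardly against the paper's own remark immediately following the corollary, where it is claimed that precisely this cut-locus condition has been removed. The paper does not address the point; it folds (A2) into the phrase ``defined on a neighborhood $G$''. Your proposed repair—a support hypothesis confining $Q$ to a ball of sub-injectivity radius—is sufficient but far stronger than the corollary intends and would undo the advertised improvement over \cite{rabivic05}. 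The honest reading is that the corollary tacitly assumes the existence of such a $G$ (equivalently, assumes (A2)), and your analysis correctly shows this hypothesis is not empty. So you have taken the same route as the paper, only more carefully; the extra support condition you impose is an overcorrection, but the underlying issue you flag is a genuine looseness in the paper's presentation rather than a gap in your argument.
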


\begin{remark}
Corollary \ref{coro-1} improves the CLT for the intrinsic mean due to \cite{rabivic05}, and also Theorem 2.3, and Theorem 5.3 in \cite{rabibook}.  
\end{remark}

  For the case of the \emph{extrinsic mean}, let $M$ be a $d$-dimensional differentiable manifold, and $J: M\rightarrow  E^N$ an embedding of $M$ into an $N$-dimensional Euclidean space. Assume that $J(M)$ is closed in $E^N$, which is always the case, in particular,  if $M$ is compact. The extrinsic distance $\rho_{E,J}$ on $M $ is defined as $\rho_{E,J}(p,q) = \|J(p)-J(q)\|$ for $p,q \in M$, where  $\|\cdot \|$ denotes the Euclidean norm of $E^N$. The image $\mu$ in $J(M)$ of the \emph{extrinsic mean} $\mu_{E,J }$ is then given by $\mu= P(m)$, where $m$ is the usual mean of $ Q\circ J^{-1}$ thought of as a probability on the Euclidean space $E^N$, and $P$ is the orthogonal projection defined on an $N$-dimensional neighborhood $V$ of $m$ into $J(M)$ minimizing the Euclidean distance between $p\in V$ and $J(M)$.  If the projection $P$ is unique on $V $ then the projection $\mu_n= P(m_n)$ of the Euclidean mean $m_n = \sum_{j=1}^n J(Y_j)/n$ on $J(M)$ is, with probability tending to one as $n\rightarrow \infty$, unique and lies in an open neighborhood $G$ of $\mu =P(m)$ in $J(M)$.  Theorem \ref{th-clt} immediately implies the following result of \cite{rabivic03} (Also see \cite{rabibook}, Proposition 4.3).

\begin{corollary}[CLT for Extrinsic Means on a Manifold]
  \label{coro-2} Assume that $P$ is uniquely defined in a neighborhood of the $N$-dimensional Euclidean mean $m$ of $Q\circ J^{-1}$.  Let $\phi$ be a diffeomorphism on a neighborhood $G$  of $\mu= P(m)$ in $J(M)$ onto an open set $ U$ in $\mathbb R^d$. Assume (A1), (A4)-(A6). Then, using the notation of \eqref{eq-maineq},
  \begin{equation*}
  \sqrt{n} \left[\phi(\mu_n)-\phi(\mu) \right]= \sqrt{n}\left[\phi(P(m_n))- \phi( P(m))\right] \xrightarrow{\mathcal{L}} N(0, \Lambda^{-1}C\Lambda^{-1}), \;\text{as}\; n\rightarrow \infty.
  \end{equation*}
\end{corollary}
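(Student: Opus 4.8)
The plan is to deduce this from Theorem~\ref{th-clt} applied to the metric space $(J(M),\|\cdot\|)$, which is isometric to $(M,\rho_{E,J})$ via the embedding $J$, taking $s=d$. Since (A4)--(A6) are assumed outright, it suffices to verify (A1)--(A3) in this setting; once that is done, \eqref{eq-maineq} is precisely the assertion, because $\phi(\mu_n)=\phi(P(m_n))$ and $\phi(\mu)=\phi(P(m))$.

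First I would record the identity on which the extrinsic theory rests: for $p\in M$,
\[
F(p)=\int\|J(p)-J(q)\|^2\,Q(dq)=\|J(p)-m\|^2+\Big(\int\|J(q)\|^2\,Q(dq)-\|m\|^2\Big),
\]
so minimizing $F$ over $M$ amounts to minimizing $\|y-m\|^2$ over $y\in J(M)$. As $J(M)$ is closed the minimum is attained, and by hypothesis $P$ is single-valued at $m$, so the Fr\'echet mean set is the singleton $\{P(m)\}$: this is (A1). Replacing $Q$ by $Q_n$ in the same computation shows that the sample Fr\'echet mean set equals $\argmin_{y\in J(M)}\|y-m_n\|^2$, which is $\{P(m_n)\}$ as soon as $m_n$ lies in the neighborhood $V$ on which $P$ is single-valued; hence on that event every measurable selection $\mu_n$ from it coincides with $P(m_n)$.

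For (A2), the map $\phi$ is by hypothesis a diffeomorphism of a neighborhood $G$ of $\mu$ in $J(M)$ onto the open set $U\subset\mathbb R^d$, in particular a homeomorphism, so $s=d$ is the right dimension. Moreover $\phi^{-1}: U\to G\subset J(M)$ is smooth as a map into $E^N$, since $J(M)$ is an embedded submanifold of $E^N$; consequently
\[
h(x;q)=\rho_{E,J}^2(\phi^{-1}(x),q)=\|\phi^{-1}(x)-J(q)\|^2
\]
is a $C^\infty$ function of $x\in U$ for every $q$ (more than (A2) requires), with $D_rh(x;q)=2\langle\partial_r\phi^{-1}(x),\,\phi^{-1}(x)-J(q)\rangle$ and an analogous expression for $D_{r,r'}h$, each affine in $J(q)$.

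For (A3), the mean $m=E[J(Y_1)]$ exists, so the strong law of large numbers gives $m_n\to m$ almost surely. Because $J(M)$ is closed and $P$ is single-valued on the open neighborhood $V\ni m$, a routine argument (any subsequential limit of $P(m_k)$ lies in $J(M)$, is a nearest point to $\lim m_k$, hence equals $P(\lim m_k)$) shows $P$ is continuous on $V$; therefore $\mu_n=P(m_n)\to P(m)=\mu$ almost surely, so $\mu_n\in G$ for all sufficiently large $n$, almost surely. With (A1)--(A6) in hand, Theorem~\ref{th-clt} gives $n^{1/2}[\phi(\mu_n)-\phi(\mu)]\xrightarrow{\mathcal{L}}N(0,\Lambda^{-1}C\Lambda^{-1})$, and substituting $\mu_n=P(m_n)$, $\mu=P(m)$ yields the displayed statement. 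The one step carrying real content is (A3) --- the continuity of $P$ near $m$ combined with the consistency $m_n\to m$; the $C^2$ regularity in (A2) is immediate from $J$ being an embedding, and (A1) is immediate from the displayed formula for $F$.
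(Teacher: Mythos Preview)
Your proposal is correct and follows essentially the same route as the paper. The paper treats Corollary~\ref{coro-2} as an immediate consequence of Theorem~\ref{th-clt}: in the paragraph preceding the corollary it records that the extrinsic Fr\'echet mean is $P(m)$ and that $\mu_n=P(m_n)$ lies near $P(m)$ with probability tending to one, which is precisely the content of your verification of (A1) and (A3); your explicit check of (A2) and the continuity argument for $P$ simply spell out what the paper leaves implicit.
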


\begin{remark} In Corollary \ref{coro-2}, one may, in particular, choose $(U,\phi)$ to be a coordinate neighborhood of $\mu=P(m)$ in $J(M)$. In \cite{rabivic03}, however, $\phi$ is chosen to be the linear orthogonal projection on $G$ into the tangent space $T_{\mu} J(M)$. 
\end{remark}

%


\begin{remark}
In the case $S=M$ is a Riemannian manifold and ($G=M$), the dispersion matrix in Theorem \ref{th-clt} ( and  Theorem \ref{th-2.4}  in the next section) is related to the sectional curvature of $M$. For $M$ with constant curvature such as $S^d$ one may express this matrix explicitly  (See \cite{absrabi1}). Recently, \cite{kendall2011} has extended this result to the important case of planar shape space $\Sigma_2^k$ and, more generally to manifolds with constant holomorphic curvature.
\end{remark}

We now turn to applications of Theorem \ref{th-clt} to the so-called \emph{stratified spaces} $S$ which are made up of several subspaces of different dimensions. 
In particular, we next consider an example where $S$ is a \emph{space of non-positive curvature} (NPC), which is not in general a differentiable manifold, but has a metric with properties of a geodesic distance (namely, minimum length of curves between points) and which is also somewhat analogous to differentiable manifolds of non-positive curvature. These spaces were originally studied by A.D. Alexandrov and developed further by Yu. G. Reshetnyak and M. Gromov (See  \cite{stu03} for a detailed treatment).  Unlike differentiable manifolds of positive curvature where uniqueness of the intrinsic mean is known only under very restrictive conditions (See \cite{karcher}, \cite{kenws} and \cite{afsari11}), on an NPC space the Fr\'echet mean is always unique, if the Fr\'echet function \eqref{eq-frechet} is finite \cite{stu03}.

We will consider a stratified NPC space $S$ which is the union of a finite number of disjoint  sets $U_k$ each of which in its relative topology in $S $ is homeomorphic to an open subset of $\mathbb R^s$, including possibly the degenerate case $s=0$, $\mathbb R^0$ being a singleton.

The results described below originated in a SAMSI working group (\url{http://www.samsi.info/working-groups/data-analysis-sample-spaces-manifold-stratification}), and are further developed in \cite{rabietall11}, \cite{openbook13}. Also see \cite{basrak10} , \cite{Osborne2013} and \cite{EJP3887}.

 Let $Q $ be a probability measure on $S$. We define the \emph{Wasserstein distance} $d_W$ on the space $\mathcal{P}(S)$ of probability measures on the Borel sigma-field of $S$ as
\begin{equation}
	   d^2_W(Q_1,Q_2) = \inf\{ E\rho^2(\boldsymbol X,\boldsymbol Y): \mathcal L(\boldsymbol X)= Q_1, \mathcal L(\boldsymbol Y) = Q_2\} ,
\end{equation}
where $\mathcal L(\boldsymbol Z)$ denotes the law, or distribution, of $\boldsymbol Z$. That is, the infimum on the right is over the set of all (joint) distributions of $(\boldsymbol X,\boldsymbol Y)$ (in $\mathcal{P} (S\times S))$ with marginals $Q_1$ and $Q_2$.  For considering  finite Fr\'echet functions the appropriate space of probabilities that we consider below is $\{\widetilde{Q}\in \mathcal{P} (S): \text{Fr\'echet function of}\; \widetilde{Q} \;\text{is finite}\}$, endowed with the Wasserstein distance.

On a stratified space $S$, we say that the \emph{Fr\'echet mean} $\mu$ of $Q$ is \emph{sticky} on $U_k$, if there exists a  Wasserstein neighborhood of $Q$ such that for every $\widetilde{Q}$ in this neighborhood the Fr\'echet mean of $\widetilde{Q}$ lies in the same stratum $U_k$.

As an immediate consequence of Theorem \ref{th-clt}, we get the following result.

\begin{proposition} Suppose the Fr\'echet mean $\mu$ of $Q$ on a stratified NPC space $S$ is sticky on a stratum $U_k$ which is not degenerate. Then, with $G=U_k$,  the CLT in Theorem \ref{th-clt} holds under the given assumptions \eqref{eq-asa2} and (A4)-(A6). In the degenerate case, i.e., $U_k = \{\mu\}$, the sample Fr\'echet mean $\mu_n$ equals $\mu$ for all sufficiently large $n$, almost surely.
\end{proposition}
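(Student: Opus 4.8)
The plan is to obtain the Proposition from Theorem \ref{th-clt} by verifying the hypotheses (A1)--(A6) with $G=U_k$. Four of these are handed to us: the $C^2$ requirement on $h(x;q)=\rho^2(\phi^{-1}(x),q)$ attached to \eqref{eq-asa2}, and (A4)--(A6), are assumed outright, while the homeomorphism $\phi\colon U_k\to U$ onto an open subset $U$ of $\mathbb R^s$ demanded in (A2) is exactly the chart provided by the stratified structure of $S$; since $U_k$ is non-degenerate, $s\ge 1$. So the task reduces to producing (A1) and (A3).

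Assumption (A1) is the uniqueness of $\mu$. As $\mu$ is the Fr\'echet mean of $Q$, $F(\mu)<\infty$, and the elementary inequality $\rho^2(p,q)\le 2\rho^2(p,\mu)+2\rho^2(\mu,q)$ then forces $F<\infty$ throughout $S$. On an NPC space a probability measure with finite Fr\'echet function has a unique Fr\'echet mean (\cite{stu03}); this gives (A1), and the same fact, applied to the finitely supported (hence finite-Fr\'echet-function) empirical measure $Q_n$, guarantees that the sample Fr\'echet mean $\mu_n$ is itself well defined and unique.

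Assumption (A3) is the crux. The key step is to show that $d_W(Q_n,Q)\to 0$ almost surely. Since $S$ is a separable metric space (a finite union of sets homeomorphic to open subsets of Euclidean spaces), Varadarajan's theorem gives $Q_n\Rightarrow Q$ weakly, almost surely; and since $\int\rho^2(x_0,\cdot)\,dQ<\infty$ for a fixed $x_0\in S$, the strong law of large numbers gives $\int\rho^2(x_0,\cdot)\,dQ_n\to\int\rho^2(x_0,\cdot)\,dQ$ almost surely. Weak convergence together with convergence of these second moments is equivalent to convergence in the $2$-Wasserstein distance $d_W$, so $d_W(Q_n,Q)\to 0$ a.s. By the definition of stickiness there is a $d_W$-ball about $Q$ inside which every probability measure with finite Fr\'echet function has its Fr\'echet mean in $U_k$; almost surely $Q_n$ eventually lies in this ball, and then its unique minimizer $\mu_n$ lies in $G=U_k$, which is (A3). (As a byproduct, the continuity of the barycenter map on an NPC space with respect to $d_W$ (\cite{stu03}) gives $\mu_n\to\mu$ a.s., the consistency tacitly invoked in the proof of Theorem \ref{th-clt}.) With (A1)--(A6) in hand, Theorem \ref{th-clt} yields \eqref{eq-maineq}.

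The degenerate case $U_k=\{\mu\}$ falls out of the same Wasserstein convergence: stickiness now asserts that every measure in some $d_W$-ball about $Q$ has Fr\'echet mean equal to $\mu$, so once $Q_n$ enters that ball --- which happens for all sufficiently large $n$, a.s. --- we get $\mu_n=\mu$. I expect the only genuine obstacle to be the almost-sure Wasserstein convergence $d_W(Q_n,Q)\to 0$: its weak-convergence part and its second-moment part are each routine, but the argument really does need the $2$-Wasserstein topology rather than mere weak convergence (stickiness is stated in terms of $d_W$), so one should check that $S$ is a complete separable metric space and that the Wasserstein characterization and Varadarajan's theorem apply in this generality.
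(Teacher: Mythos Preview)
Your proposal is correct and is exactly in the spirit of what the paper intends. The paper itself offers no proof at all: the Proposition is introduced with the sentence ``As an immediate consequence of Theorem \ref{th-clt}, we get the following result,'' and nothing further is said. You have simply made explicit the two verifications the paper leaves to the reader --- (A1) from the NPC uniqueness theorem of \cite{stu03}, and (A3) from almost-sure $2$-Wasserstein convergence $d_W(Q_n,Q)\to 0$ combined with the definition of stickiness --- and your treatment of the degenerate case is likewise the intended one. The concern you flag at the end (that one needs $S$ to be Polish for Varadarajan and for the ``weak convergence plus moment convergence $\Leftrightarrow$ $W_2$-convergence'' characterization) is legitimate but harmless here: an NPC space is complete by definition, and the stratified spaces under discussion, being finite unions of sets homeomorphic to open Euclidean subsets, are separable.
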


\begin{example}[\emph{Open Book}]

Let $S= \left(\cup_{k=1,\dots,K }  H_k\right)\cup S_0$ where $H_k : = \{k\}\times H$, $H= \mathbb R^D \times [0, \infty)$, $S_0 = \{0\}\times \mathbb R^D$, with the boundary point $(k; 0,x^1,\ldots,x^D)$ of $H_k$ identified with the point $(0,x^1,\ldots,x^D)$ of $S_0$ for all $k$.  That is, $S$ is the union of $K$ copies of the half space $H$ glued together at the common border or \emph{spine} $S_0 = \{0\}\times \mathbb R^D$. We express $S$ as the disjoint union $S = (\cup_{ k=1,\ldots,K}  S_k )\cup S_0$, where the $k$-th \emph{leaf} is $S_k = \{(k;x^0,x^1,\ldots,x^D)\}$ with $x^0 \in (0, \infty)$, $x^j \in \mathbb R$ for  $j=1,\dots,D$. For a point $\boldsymbol x = (x^0, x^1,\ldots,x^D)  \in H$ we define its \emph{reflection across the spine}  $S_0$ as $R\boldsymbol x = (-x^0, x^1,\ldots,x^D)$. Using $\| \cdot\|$ for the Euclidean norm, the distance $\rho$ on $S$ is then defined by
\begin{align}
	    \rho((k;\boldsymbol x), (k;\boldsymbol y))& = \|\boldsymbol x-\boldsymbol y\| \; \forall \boldsymbol x, \boldsymbol y \in H= \mathbb{R}^D \times [0, \infty), k=1,\ldots,K;\\ \nonumber
\rho((k;\boldsymbol x), (k';\boldsymbol y)) &= \|\boldsymbol x-R\boldsymbol y\|= \| R\boldsymbol x -\boldsymbol y\| \; \forall \boldsymbol x, \boldsymbol y \in H= \mathbb R^D \times [0, \infty), \;\text{if}\; k\neq k'.
\end{align}
Note that while the zero-th coordinate  $x^0$ of $\boldsymbol x$ is nonnegative, that of $R\boldsymbol x$ is $-x^0$ and is negative or zero, so that if $k \neq k'$
\begin{equation}
	\rho^2\left((k;x^0, x^1,\ldots,x^D), (k';y^0,y^1,\ldots,y^D)\right) = (x^0+y^0)^2 + \|(x^1,\ldots,x^D)- (y^1,\ldots,y^D)\|^2 .
\end{equation}
\end{example}

   We now provide an exposition of a characterization of sticky Fr\'echet means on open books due to  \cite{openbook13}: ``Sticky central limit theorems on open books", with slightly different notations and terminology.  Assume that $w_k= \mu(S_k) > 0 $ for all $k=1,\ldots,K.$  Define the following  \emph{$k$-th folding map} $f_k$ on $S$ into $\mathbb{R}^{D+1}$ as
\begin{equation}
	f_k((k:\boldsymbol x)) =  \boldsymbol x,   f_k((k':\boldsymbol x)) = R\boldsymbol x\;  \text{if}\; k'\neq k \; (k=1,\dots,K),
\end{equation}
and denote by $m_k$ the usual (one-dimensional) mean of the zero-th coordinate of  $f_k$:
\begin{equation}
\label{eq-mk}
	m_k = \int z^0 (Q\circ f_k^{-1})(d\boldsymbol z) = -(1/2)\left[\partial /\partial x^0\int_{\mathbb R^{d+1} }\|\boldsymbol z-\boldsymbol x\|^2 (Q\circ f_k^{-1})(d\boldsymbol z) \right]_{x^0=0}.
\end{equation}

Let $\widetilde{Q}$ be the distribution induced by $Q $ on $H $ under the projection $\pi$ on $S$ into $H$ defined by $\pi(k;\boldsymbol x)=\boldsymbol x$  (and $\pi(\boldsymbol x)=\boldsymbol x $ on $S_0$).  Let $Q_k$ be the measure $\widetilde{Q}$ restricted to $\pi(S_k)$. Note that $Q_k = Q\circ f_{k}^{-1}$ restricted to $S_k$.  Also, let $Q_0$ be the restriction of $Q$ (or $\widetilde{Q}$) to $S_0.$
In view of the additive nature of $\rho^2$, the minimization of the Fr\'echet function is achieved separately for the zero-th coordinate $x^0$ of $\boldsymbol x$ along with the leaf on which it lies, and the remaining D coordinates $(x^1,\ldots,x^D)$. The last $D$ coordinates of the Fr\'echet mean on $S$ is simply the mean $\mu_{1D}$, say,  of $(x^1,\ldots,x^D)$ under $\widetilde{Q}$.  The position of the Fr\'echet mean $\mu$, or whether it is sticky on the spine $S_0$ or to some other stratum, is determined by $m_k$ ($k=1,\ldots,K)$.   Since the integral on the right side of \eqref{eq-mk} is the Fr\'echet function of $Q$ evaluated on the leaf $S_k$ at the spine, it follows from \eqref{eq-mk} that \emph{if $m_k>0$, then, for a while,  the Fr\'echet function is strictly decreasing on $S_k$ along the zero-th coordinate as it moves away from the spine $S_0$.}  On the other hand, if $m_k > 0$ then $m_{k'} <0$ for all $ k'\neq k$. For this note that $m_k =  \int_{H} \boldsymbol z^0 Q_k(d\boldsymbol z) - \sum_{1\leq k'\neq k} \int_{H} \boldsymbol z^0 Q_{k'}(d\boldsymbol z)$.  Comparing this with the corresponding expression for $m_{k'}$, we see that $m_{k'}  \leq  \int_H \boldsymbol z^0 Q_{k'}(d\boldsymbol z) - \int_{H }\boldsymbol z^0 Q_k(d\boldsymbol z) <0$, since  $m_k >0.$  Hence \emph{the Fr\'echet function is strictly increasing on $S_{k'}$ for all $k'\neq k$ along the zero-th coordinate as it increases, i.e., as the point moves away from the spine $S_0$.} It follows that $\mu \in S_k$.  Also, if $m_k>0$ then there exists a neighborhood of $Q$ in the Wasserstein distance on which $m_k>0$.  That is, if $m_k>0$ for some $k$, then $\mu$ is sticky on the stratum $S_k$ , and Theorem \ref{th-clt} applies with $s= D+1 =d.$  It is clear that the Fr\'echet mean in this case is $\mu = (k; m_k, \mu_{1D})$, and the asymptotic distribution of $\pi(\mu_n)$ is Normal with mean $(m_k, \mu_{1D} )$, and  covariance matrix $n^{-1}\Sigma$, where $\Sigma$ is the $d\times d$  covariance matrix of $Q\circ f_{k}^{-1}$, which follows from the classical multivariate CLT for i.i.d. summands with common distribution $Q\circ f_{k}^{-1}$.  The above argument also shows that if $m_k <0$ for all $k=1,\ldots,K,$  then $\mu$  belongs to $S_0$, and it is \emph{sticky on the spine $S_0$}, so that Theorem \ref{th-clt} applies with $s= D$. In this case $\mu= (0, \mu_{1D})$ and, with probability tending to one as $n\rightarrow \infty$, $\mu_n$ lies in $S_0$, with its zero-th coordinate as 0, and its remaining $D$ coordinates comprising the mean of $n$ i.i.d. vectors with the common distribution that of  $(X_1,\ldots,X_D)$ under $Q$. Thus, again, by the classical multivariate CLT for i.i.d. summands, the asymptotic distribution of $\mu_n = \pi(\mu_n)$ on $S_0$ is Normal $N((0, \mu_{1D}), n^{-1}\Sigma_0)$. Note that $\Sigma_0$ is the same as the $D\times D$ upper sub-matrix of $\Sigma$.

 To complete the picture consider the case  $m_k =0$ for some $k$. Then once again $m_{k'}<0 $ for all $k'\neq k$, and the minimum of the Fr\'echet function occurs on $S_0\cup S_k = \bar{S}_k$. Let $m_{k,n}$ be the sample mean of the zero-th coordinate under $Q\circ f_{k}^{-1}$.  Since the set $\{Q': m_{k'}<0$ for all $k'\neq k\}$ is open in the Wasserstein distance (in the set of probabilities $\{Q':\text{ Fr\'echet function of $Q'$ is finite}\}$), if  $m_{k,n} \leq 0$ then the sample Fr\'echet mean $\mu_n$ lies in $S_0.$ If $m_{k,n} > 0$, then $\mu_n$ lies in $S_k$.   Since $E(m_{k,n})=m_k= 0$, it follows by the classical CLT that the asymptotic distribution of $\mu_n$ is, with probability $\frac{1}{2}$, $N((0, \mu_{1D}), n^{-1}\Sigma_0 )$ on $S_0$ and, with probability $\frac{1}{2}$,  it has the asymptotic  distribution on $S_k$ of its numerical coordinates as the conditional distribution of    $(X^0, X^1,\ldots , X^D)$, given $X^0>0$, where $(X^0,X^1,\ldots ,X^D)$ has the distribution $N((0, \mu_{1D}), n^{-1}\Sigma)$.

We refer to other examples of stratified spaces such as considered in \cite{ref-Felsen} and \cite{barden13}, where also Theorem \ref{th-clt} applies. These may be thought of as toy models for the study of phylogenetic trees pioneered by S. Holmes and her collaborators (see, e.g., \cite{Billera2011},  \cite{holmes05}).

\section{A CLT for the intrinsic mean}
\label{sec-3}

We begin with the circle  $S^1$. Under the assumption of a continuous density $f$ of $Q$ on $S^1$, a necessary and sufficient condition for the existence of a unique minimizer of the intrinsic Fr\'echet function on the circle $S^1$ was given in the  manuscript \cite{rabinotes}, showing, in particular, the twice continuous differentiability of the intrinsic Fr\'echet function. It is further shown there that the Fr\'echet function is convex at $p\in S^1$  if $f(-p) <1/2\pi$, concave if $f(-p) >1/2\pi$ . This work is mentioned in \cite{huckman_circle}, p. 182, and also appears in \cite{rabibook}, pp. 73-75, 31-33. Under a continuity assumption, a direct proof of the CLT of the Fr\'echet mean is given in \cite{mckilliam2012}, and extended further in \cite{huckman_circle} when the continuity assumption does not hold.

\begin{proposition}
\label{prop-3.1}
On $S^d$  the Fr\'echet function is twice continuously differentiable if $Q$ has a twice continuously differentiable density $f$. 
\end{proposition}

\begin{proof}
For this one expresses the Fr\'echet function as  $F(p) = \int_{D_\pi}\|v\|^2f(\exp_pv)m(dv)$ with a natural identification with the disc $D_{\pi} =\{v:0\leq \|v\|<\pi\} $ $(\subset \R^d)$ of the image of $S^d \backslash \{-p\} $ in $T_pS^d $ under the map $\log_p$, and $m(dv)$  denoting the measure induced on $T_pS^d$  from the volume measure on $S^d$  by the map $log_p$, thought of as a measure on $D_\pi$ by corresponding identifications for all $p$. 
\end{proof}

\begin{remark}

Since the squared intrinsic distance $\rho_g^2(p,q)$  is smooth in $p$  for $q$ outside any neighborhood of $\{-p\}$,  it is probably enough to assume that $f$ has continuous derivatives of order one, or even that  $f$ is continuous.  Also, we expect Proposition \ref{prop-3.1} and its proof to carry over to more general Riemannian manifolds such as those which are \emph{homogeneous} (\cite{docarmo}, p.154).
\end{remark}

On a general complete connected $d$-dimensional Riemannian manifold $(M, g)$,  the \emph{cut point} of a point $p$ along a geodesic $\gamma(t)$, $t\geq 0$ ($\gamma(0)=p$) is $\gamma(t_0)$, where $t_0=\sup\{t\geq 0: \gamma(u), 0\leq u\leq t,\;\text{ is the unique distance minimizing }\\ \text{segment of}  \;\gamma\;\text{  between}\;   p\;\text{  and}\; \gamma(t)\}$.  The set of all cut points of $p$ along geodesics is called the \emph{cut locus} of $p$ and denoted $C(p)$ (\cite{docarmo}, p. 207). Suppose the intrinsic mean $\mu_I$ of a probability measure $Q$ on $M$ exists. Take $\mu=\mu_I$,   $\phi(p) = \log_{\mu}(p)$ defined on $M\backslash C(\mu)$.  Then $\phi^{-1}(x) = \exp_{\mu}(x)$ and $x\rightarrow  h(x,q) $ is twice continuously differentiable on  $J((M\backslash C(\mu))\backslash C(q))$.  Observe that $p\in C(q)$ if and only if $q\in C(p)$ (\cite{docarmo}, p. 271). By a slight abuse of notation, we will denote by $C(U)$ the set of cut loci of all points in a set $U\subset M$. Let $B(\mu;  \epsilon)$ denote the geodesic ball with center $\mu$ and radius $\epsilon$. Then $\phi(B(\mu; \epsilon))$ is the ball in $T_{\mu}M$ with center $\nu=\phi(\mu)=0$ and radius $\epsilon$. We then have the following result. 



\begin{theorem}[CLT for Intrinsic  Means-II]
\label{th-2.4}
Suppose that $Q$ has an intrinsic mean  $\mu$, and that $Q$ is absolutely continuous in a neighborhood $W$ of the cut locus of $\mu$ with a continuous density there with respect to the volume measure. Assume also that (i) $Q(C(B(\mu;\epsilon))) = O(\epsilon^{d-c})$, $\epsilon\rightarrow 0$,  for some $c$, $0\leq c< d$, (ii) on some neighborhood $V$ of $\nu = \phi(\mu) =0$ the function $\theta\rightarrow F\left(\phi^{-1}(\theta)\right)$  is twice continuously differentiable with a nonsingular Hessian $\Lambda(\theta)$, and (iii) (A4) holds with $\phi(\mu)$ replaced by $\theta $, $\forall$$\theta\in V$. Then, if $d>c+2$, one has the CLT \eqref{eq-maineq}  for the sample intrinsic mean $\mu_n$.
\end{theorem}

\begin{proof}
 Without loss of generality we take the neighborhood $V $ of $\nu=0$ sufficiently small such that $C(\phi^{-1}(V))\subset W$. Then $Z_n(\theta):= n^{-1}\sum_{1\leq j\leq n}\text{grad}\;h(\theta,Y_j)$ is well defined for $Y_j\not\in C(\phi^{-1}\theta)$, $j=1,\ldots, n$, that is,   with  probability one,  provided $\theta\in V$, since  $Q(C(\phi^{-1}\theta))=0$. By the classical CLT, $Z_n(0):= n^{-1}\sum_{1\leq j\leq n}\text{grad}\;h(0,Y_j) $ is of the order $O_p(n^{-1/2}).$  Let $B_n$  be the ball in $T_{\mu}M$ with center $\nu= \phi(\mu)=0$ and radius $n^{-1/2}\log n$. By hypothesis, the probability that  $Y_j\in C(\phi^{-1}(B_n))$ is $O((n^{-1/2}\log n)^{d-c} )$.  For $\phi^{-1}(B_n)$ is the geodesic ball $B(\mu; n^{-1/2}\log n)$, hence the probability that the set $ \{Y_j:j=1,\ldots, n\}$  intersects $C(\phi^{-1}(B_n))$  is $O(n(n^{-1/2}\log n)^{d-c} ) = o(1)$ if $d>c+2$. Hence with probability converging to 1, one may use a Taylor expansion of $Z_n(\theta)$ in $B_n$, 
 \begin{align}
Z_n(\theta) = Z_n(\nu) + \Lambda_n(\theta)(\theta-\nu),\;\; (\theta\in B_n), \; (\nu=0),     
\end{align}                                                                       
where $\Lambda_n(\theta)$ is the $d\times d$ matrix whose $(r,r') $ element is  $n^{-1}\sum_{1\leq j\leq n}D_{r,r'}h(\theta(n;r,r', Y_j), Y_j)$  with $\theta(n;r,r',Y_j)$ lying on the line segment joining $\theta$ and $\nu=0$. 
By hypothesis (ii), with probability converging to one as $n\rightarrow \infty$, $\Lambda_n(\theta)$  is nonsingular for all large $n$ ($\theta\in B_n$) since its difference (in norm) from the Hessian $\Lambda(\theta)$  goes to zero as $n\rightarrow \infty$, by the strong law of large numbers. Now, with probability going to 1, the function $\theta\rightarrow H_n(\theta) =0 -\Lambda_n(\theta)^{-1}Z_n(\nu)$ maps  $\bar{B}_n$   into itself, where  $\bar{B}_n$  is the closure of $B_n$. For this argument recall that $Z_n(0) = O_p(n^{-1/2})$ by the classical CLT. By the \emph{Brouwer fixed point theorem} (\cite{milnor1997topology}), $H_n(\theta) $ has a fixed point. Letting $\nu_n$ denote a measurable selection from the set of fixed points in  $\bar{B}_n$ , it follows that, with probability going to 1, $\nu_n$ converges to $\nu$ and satisfies the first order equation \eqref{eq-firstorder}. Hence one may take  $\nu_n$ as the sample intrinsic mean (Note that the Fr\'echet function is strictly convex in a neighborhood of $\nu$). The CLT now follows as in the last line of the proof of Theorem  \ref{th-clt}.  

\end{proof}

\begin{remark}
For $d\leq c+2$ the condition (i) in Theorem \ref{th-2.4} does not imply that the probability  the set  $\{Y_1,\ldots, Y_n\}$ intersects  $C(\phi^{-1}(B_n))$   goes to zero. Intuitively one may think that the cut locus of the image under $\phi^{-1}$ of a small neighborhood of the  \emph{random line} joining $\nu_n$ and 0 intersecting  $\{Y_1,\ldots, Y_n\}$ is negligible; but we do not know how to justify this intuition or that it is even true. 

\end{remark}

\begin{corollary} 
\label{coro-2.5}
Suppose $Q$ on $M= S^d$ ($d>2$) has an intrinsic mean $\mu$ and is absolutely continuous on a neighborhood $W$ of $C(\mu)$ with a continuous density on $W$. Suppose that the hypotheses (ii), (iii) of Theorem \ref{th-2.4} hold. Then the CLT for the sample intrinsic mean holds.
\end{corollary}

\begin{proof} 
It is enough to note that the hypothesis (i) in Theorem \ref{th-2.4} holds. Note that in the present case $C(\mu)=\{-\mu\}$ and $C(\phi^{-1}(B_n))$ is the set $-\phi^{-1}(B_n)=\{-B(\mu; n^{-1/2}\log n)\}=B(-\mu; n^{-1/2}\log n)$. The probability that $\{Y_1,\ldots, Y_n\}$ intersects this last set is $O(n(n^{-1/2}\log n)^d ) $, since the density of $Q$ on a small compact neighborhood of $C(\mu)$ is bounded.

\end{proof}

\begin{remark}
As mentioned at the beginning of this section, $F$  is twice continuously differentiable if $Q$ has a twice continuously differentiable density. We expect that the proof can be extended to the case where $Q$ has a smooth density only in a neighborhood of $C(\mu).$  In the case of $S^1$ this is known under the assumption of just continuity of the density at $\mu$ (See \cite{huckman_circle} or the proof in \cite{rabibook} or \cite{rabinotes}).  It is for this reason we have not assumed in Theorem \ref{th-2.4} and Corollary \ref{coro-2.5} that $Q$ has a smooth density, although the Fr\'echet function is assumed to be twice continuously differentiable in a neighborhood $C(\mu)$. 
\end{remark}

\begin{remark}

Although it is curious that the proof of Theorem \ref{th-2.4}  does not hold for $d=2$, the authors expect that a proof of Corollary \ref{coro-2.5}  for the case $d=2$ may be given using polar coordinates.  For the moment the CLT for $S^2$  is derived only under the support  restriction of Corollary \ref{coro-1}. 
\end{remark}

\begin{remark}
\label{remark-3.8}
 Suppose $\mathcal G$ is a Lie group of isometries on $S^d$, $d>2$. Then the projection $\pi: S^d\rightarrow S^d/\mathcal G$  is a \emph{Riemannian submersion} on $S^d$ onto its quotient space $M= S^d/\mathcal G$ (\cite{gl} , pp. 63-65, 97-99). Let $Q$ be a probability measure on $S^d$ with a twice continuously differentiable density and a Karcher or  intrinsic mean $\mu$. Let $\tilde \mu$ be the projection of $\mu$. Then, in local coordinates, the differential of the Fr\'echet function on $M$ vanishes at $\tilde \mu$, because $\pi$ is smooth and the differential of the Fr\'echet function on $S^d$ vanishes at $\mu$. If $\tilde \mu$ is a Karcher or intrinsic mean of $\tilde Q$, then the delta method provides a CLT for the corresponding sample Fr\'echet mean $\tilde \mu_n$  in local coordinates. If $\tilde \mu$ is just a local minimum, one can still use the CLT for two sample problems (See \cite{absrabi1, rabibook}).  One may also explore the opposite route for a probability $\tilde Q$ on $M$ with a density and a unique intrinsic/Karcher mean $\tilde \mu$  and a probability $Q$, among a fairly large family of distributions with smooth densities on $S^d$  whose projection on $M$  is $\tilde Q$, such that $Q$ satisfies the hypothesis of Corollary \ref{coro-2.5} with  $\pi(\mu) =\tilde \mu$ . One may then apply the CLT on $S^d$  to derive one on $S^d/\mathcal G$. As an example consider the antipodal map $g(p)=-p$, and $\mathcal G= \{g, \text{identity}\}$. Let $\tilde Q$ be a probability on $M= S^d/\mathcal G = \mathbb RP^d$ (the real projective space) thought of as a probability on the upper hemisphere vanishing smoothly at the boundary, and with a unique intrinsic mean $\tilde \mu=\{\mu,-\mu\}$, where $\mu$ is the Karcher mean of $Q $ (restricted to the hemisphere). This opens a way for CLT's on Kendall's shape spaces as well.

\end{remark}

\begin{remark}
Instead of defining the Fr\'echet mean with restricted to the squared distance $\rho^2$, one may define it with respect to $\rho^{\alpha}$, $\alpha\geq 1$, in \eqref{eq-frechetf}, and derive Theorems \ref{th-clt}, \ref{th-2.4}, if the assumptions hold with respect to $\rho^{\alpha}$ in place $\rho^2$. Note that Proposition \ref{prop-3.1} extends easily to this case. 
\end{remark}

\begin{remark}
As indicated in Remark \ref{remark-3.8}, one of the significances of a CLT on $S^d$ is that it may provide a  route to intrinsic CLTs on $S^d/\mathcal G$, the space of orbits under a \emph{Lie group $\mathcal G$ of isometries} of $S^d$.  Such spaces include the so-called \emph{axial spaces} (or \emph{real projective spaces} $\mathbb R P^d$), and Kendall type  shape spaces which are important in \emph{shape-based image analysis}.  For the latter spaces $S^d$ is the so-called \emph{preshape sphere} (see, e.g., \cite{rabibook}, p.82). Observe  that  the hypothesis (i) of Theorem \ref{th-2.4} may not hold in all such quotient spaces. For example, on $\mathbb R P^d$ one only has the order $O(\epsilon)$ in hypothesis (i) in Theorem \ref{th-2.4}, since the cut locus of the a point in $\mathbb{R} P^d$ is isomorphic to $\mathbb{R} P^{d-1}$.   For Kendall's planar shape space, identified as the \emph{complex projective space} $\mathbb{C}P^{k-2}$, of dimension $d=2k-4$, the volume measure of $C\left(B(\mu;\epsilon)\right)$ is $O(\epsilon^{2})$,  since the cut locus of a point of $\mathbb{C}P^{k-2}$ is isomorphic to $\mathbb{C}P^{k-3}$. For these facts refer to \cite{gl}, Section 2.114, pp. 102, 103. 
\end{remark}

\section{Real data examples}
\label{sec-data}

\subsection{Kendall's planar shape space (Corpus Callosum shapes of normal and ADHD children)}

We consider a planar  shape data set,  which involve measurements of a group typically developing children and a group of children suffering the ADHD (Attention deficit hyperactivity disorder).  ADHD  is one of the most common psychiatric  disorders for children that can continue through adolescence and adulthood. Symptoms include difficulty staying focused and paying attention, difficulty controlling behavior, and hyperactivity (over-activity). ADHD  in general has three subtypes: (1) ADHD hyperactive-impulsive (2) ADHD-inattentive; (3) Combined hyperactive-impulsive and inattentive  (ADHD-combined) \cite{ADHDtype}.  ADHD-200 Dataset (\url{http://fcon_1000.projects.nitrc.org/indi/adhd200/}) is a data set that  record both anatomical and resting-state functional MRI data of 776 labeled subjects across 8 independent imaging sites, 491 of which were obtained from typically developing individuals and 285 in children and adolescents with ADHD (ages: 7-21 years old).  
 The  Corpus Callosum shape data are extracted  using the CCSeg package, which contains 50 landmarks., with 50 landmarks on the contour of the Corpus Callosum of each subject (see \cite{hongtu15}). 
 After quality control,  647 CC shape data out of 776 subjects were obtained, which included 404 ($n_1$) typically developing children, 150 ($n_2$) diagnosed with ADHD-Combined, 8 ($n_3$) diagnosed with ADHD-Hyperactive-Impulsive,  and 85 ($n_4$) diagnosed with ADHD-Inattentive. Therefore, the data lie in the space $\Sigma_2^{50}$, which has a high dimension of $2\times 50-4=96$.   To provide a better picture of the data, we give displays of the landmark data  by making the scatter plots of the landmarks selected from the contours of the CC midsections, for the  243 young individuals diagnosed with ADHD. See Figure \ref{raw}.

    We carry out  \emph{extrinsic two sample tests} based on Corollary \ref{coro-2}  between the group of typically developing children and the group of children diagnosed with ADHD-Combined, and also between the group of typically developing children  and ADHD-Inattentive children. We construct test statistics that  base on the asymptotic distribution of the extrinsic mean for the planar shapes.
    \begin{figure}[ht]
\caption{Raw landmarks from the contour of the Corpus Callosum for 243 ADHD child}
\label{raw}
\includegraphics[width=8cm]{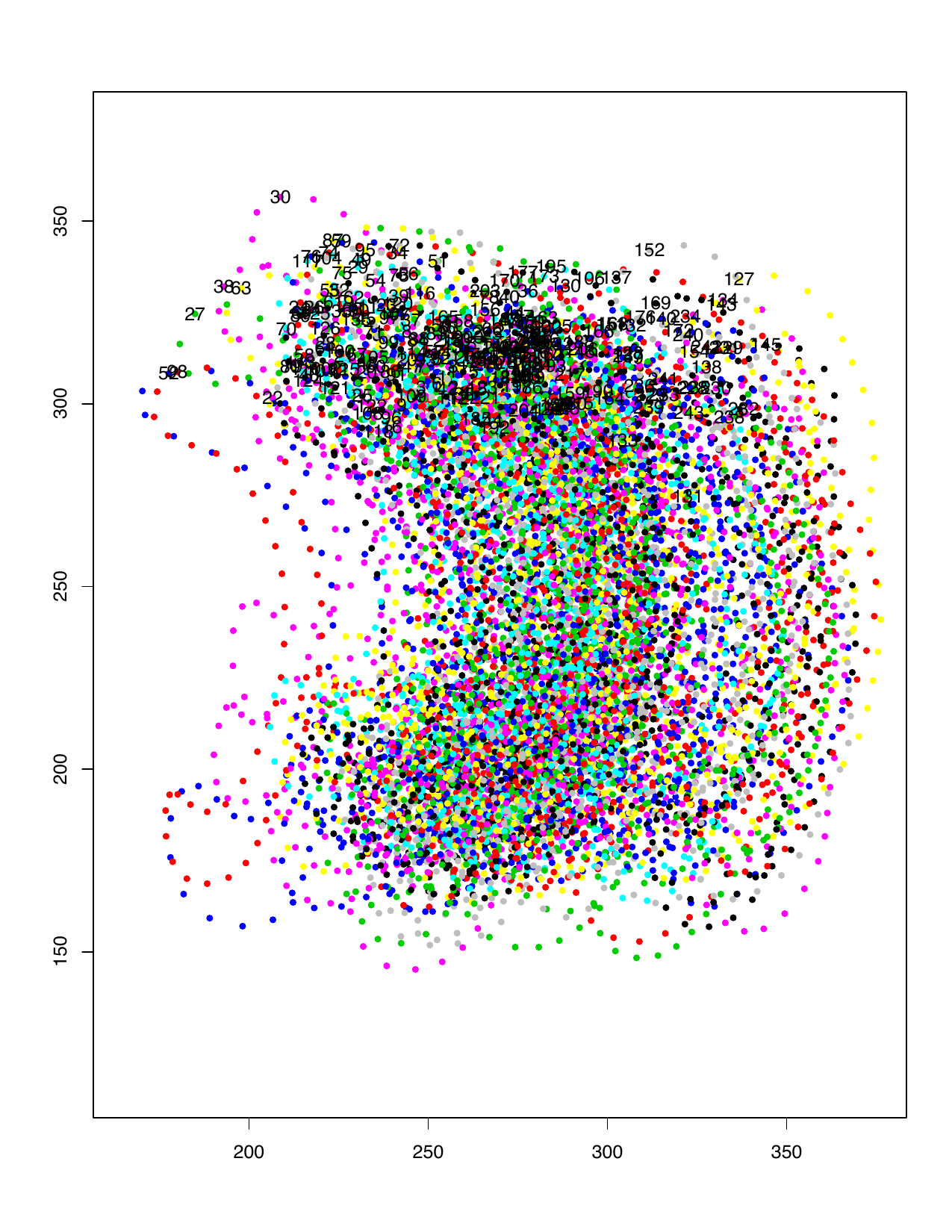}
\end{figure}
 

The $p$-value for the  two-sample test between the group of typically developing children and the group of children diagnosed with ADHD-Combined is $5.1988\times 10^{-11}$, which is based on the asymptotic chi-squared distribution given in Corollary \ref{coro-2}. The $p$-value for the test between the group of typically developing children and the group  ADHD-Inattentive children is smaller than $10^{-50}$.  It has been suggested the small $p$-values may result from the high dimension of the data. An alternative approach may perhaps be based on neighborhood testing in the context of Hilbert manifolds in which the shape contour is treated as an infinite-dimensional object \cite{Ellingson2013, hilbert-mani, vicbook}.

The planar shape data and the codes used for computing the $p$-values can be found in \url{http://www.stat.duke.edu/~ll162/research/planar.zip}.

\subsection{Positive definite matrices with application to diffusion tensor imaging}

  We consider $\sym^{+}(p)$, the space of $p\times p$ positive definite matrices.
Let $A\in \sym^{+}(p)$ which  follows a distribution $Q$.  The Euclidean metric of $A$ is given by $\|A\|^2=\trace(A)^2$.  Since $\sym^{+}(p)$ is an open convex subset of $\sym(p)$, the space of all $p\times p$ symmetric matrices, the mean  of $Q$  with respect to the Euclidean distance is given by the  Euclidean mean
\begin{equation}
\mu_E=\int A Q(dA).
\end{equation}

 Another  metric for $\sym^{+}(p)$  is the \emph{$\log$-Euclidean metric} \cite{Arsigny06}. Let $J\equiv \log:  \sym^{+}(p)\rightarrow \sym(p)$ be the inverse of the \emph{exponential map} $B\rightarrow e^B$,  $\sym(p)\rightarrow \sym^{+}(p)$, which is the matrix exponential of $B$. $J$ is a diffeomorphism. The $\log$ Euclidean distance is given by
\begin{equation}
\rho_{LE}(A_1,A_2)=\| \log(A_1)-\log(A_2)\|.
\end{equation}
Note that $J$ is an embedding on  $\sym^{+}(p)$ onto $\sym (p)$ and, in fact, it is an equivariant embedding under the group action of $\text{GL}(p, \R)$ , the  general linear group of $p\times p$ non-singular matrices. The extrinsic mean of $Q$ under $J$ is given by
\begin{equation}
\mu_{E,J}=\exp(\int (\log (A))Q(dA)).
\end{equation}

Also, this is the \emph{intrinsic mean} of $Q$ under the bi-invariant metric of $\sym^{+}(p)$ as a Lie group under multiplication: $A_1\circ A_2=\exp(\log(A_1)+\log(A_2))$.
Since it is also the metric inherited from the vector space $\sym(p)$, $\sym^+(p)$ has zero sectional curvature. Another commonly used metric tensor on $\sym^+(p)$ is the \emph{affine metric}: $\langle\langle Y, Z\rangle\rangle_A=\trace(A^{-1}YA^{-1}Z)$ $\forall Y, Z\in\sym(p).$ It is known that, with this metric, $\sym^+(p)$ has non-positive curvature \cite{affine-invariate}. We do not use this in our DTI data example, because it is computation intensive and yields results are often indistinguishable from those using the log-Euclidean metric \cite{2014arXiv1407.6383S}.


 Theorem \ref{th-clt} applies to  sample Fr\'echet means under both the Euclidean and $\log$-Euclidean distances.  Let $X_1,\ldots, X_{n_1}$ be an i.i.d sample from $Q_1$ on  $\sym^{+}(p)$ and $Y_1,\ldots, Y_{n_2}$ be an i.i.d sample from  distribution $Q_2$ on  $\sym^{+}(p)$, with $\bar X$ and $\bar Y$ their corresponding sample means.  Consider the case $p=3$, $\bar X$ and $\bar Y$ are the sample mean vectors of dimension 6 for the 6 distinct values of the vectorized data. Let $\Sigma_X$ and $\Sigma_Y$ be the sample covariance matrices.  For testing the two-sample hypothesis $H_0:$ $Q_1=Q_2$,  use the test statistic $(\bar X-\bar Y)\Sigma^{-1}(\bar X-\bar Y)^T$ with  $\Sigma=(1/n_1\Sigma_X+1/n_2\Sigma_Y)$, which has the asymptotic chisquare distribution $\chi^2(6)$.  A similar test statistic is used for the log-Euclidean distance, after taking matrix-log of the data.

$\sym^{+}(3)$, the space of  $3\times 3$  positive definite matrices, has important applications  in diffusion tensor imaging (DTI). Diffusion tensor imaging provides  measurements of  $3\times 3$ diffusion matrices of molecules of water in tiny voxels in the white matter of the brain.  When there are no barriers, the diffusion matrix is isotropic.   When a trauma occurs, due to an injury or a disease, this highly organized structure,   due to axon (nerve fiber) bundles and their myelin sheaths (electrically insulating layers), is disrupted and anisotropy decreases. Statistical analysis of DTI data using  two- and multiple-sample tests is important  in  investigating brain diseases such as  autism, schizophrenia, Parkinson's disease and Alzheimer's disease. There has been a growing body of work on DTI data analysis  \cite{2014arXiv1407.6383S, 2014arXiv1406.3361J, dryden2009}.


We now  consider a diffusion tensor imaging (DTI) data set  consisting of 46 subjects with 28 HIV+ subjects and 18 healthy controls.   Diffusion tensors were extracted along the fiber tract of the splenium of the corpus callosum. The DTI data for all the subjects are  registered in the same \emph{atlas space} based on arc lengths, with 75 features  obtained along the fiber tract of each subject. This data set has been studied in a regression setting in \cite{Yuan2012}. Our results are new and do not follow from \cite{Yuan2012}.  We carry out two sample tests between the control group and the HIV+ group for each of the 75 sample points along the fiber tract. Therefore, 75 tests are performed  in total. Two types of tests are carried out based on the Euclidean distance and the log-Euclidean distance.  


The simple Bonferroni procedure for testing $H_0$ yields a $p$-value equal to 75 times the smallest $p$-value which is of order $10^{-7}$. To identify sites with significant differences, the 75 $p$-values are ordered from the smallest to the largest with a \emph{false discovery rate} of $\alpha=0.05$, $58$ sites are found to yield significant differences using the Euclidean distance, and 47 using the $\log$-Euclidean distance (see \cite{citeulike:1042553}).

%
%


\begin{remark}
Extremely small $p$-values such as of the order $O(10^{-5})$ or smaller, computed using the chisquare approximation, are subject to coverage errors. They simply indicate that the $p$-value is extremely small. With such large observed values of the statistic von Bahr's inequality \cite{bahr}, showing the tail probability under $H_0$ to be smaller than $o(n^{-r})$ for every $r>0$, may perhaps be used as a justification.

\end{remark}

\section*{Acknowledgement.}
The authors are grateful to the two referees for their reviews. Their constructive suggestions and criticism have helped us improve the paper.
 The authors are indebted to Professor Susan Holmes for a helpful discussion. We thank Professor Hongtu Zhu for kindly providing us the data sets used in Section \ref{sec-data}.  This work is partially supported by the NSF grants DMS 1406872 and IIS 1546331.


\bibliographystyle{amsplain}
\bibliography{reference}

\providecommand{\bysame}{\leavevmode\hbox to3em{\hrulefill}\thinspace}
\providecommand{\MR}{\relax\ifhmode\unskip\space\fi MR }
\providecommand{\MRhref}[2]{%
  \href{http://www.ams.org/mathscinet-getitem?mr=#1}{#2}
}
\providecommand{\href}[2]{#2}
\begin{thebibliography}{10}

\bibitem{afsari11}
B.~Afsari, \emph{Riemannian ${L}^p$ center of mass: existence, uniqueness, and
  convexity.}, Proc. Amer. Math. Soc. \textbf{139} (2011), 655--673.

\bibitem{Arsigny06}
A.~Arsigny, P.~Fillard, X.~Pennec, and N.~Ayache, \emph{Log-{E}uclidean metrics
  for fast and simple calculus on diffusion tensors}, Magn. Reson. Med.
  \textbf{56} (2006), no.~2, 411--421.

\bibitem{ananda}
A.~Bandulasiri, R.~N. Bhattacharya, and V.~Patrangenaru, \emph{Nonparametric
  inference on shape manifolds with applications in medical imaging}, J.
  Multivariate Anal. \textbf{100} (2009), 1867--1882.

\bibitem{vic05}
A.~Bandulasiri and V.~Patrangenaru, \emph{Algorithms for nonparametric
  inference on shape manifolds}, Proc. of JSM 2005, MN (2005), 1617--1622.

\bibitem{barden13}
D.~Barden, H.~Le, and M.~Owen, \emph{Central limit theorems for {F}r\'echet
  means in the space of phylogenetic trees}, Electronic J. Probab. \textbf{18}
  (2013), 1--25.

\bibitem{basrak10}
Bojan Basrak, \emph{Limit theorems for the inductive mean on metric trees},
  Journal of Applied Probability \textbf{47} (2010), no.~4, 1136--1149.

\bibitem{citeulike:1042553}
Y.~Benjamini and Y.~Hochberg, \emph{Controlling the false discovery rate: a
  practical and powerful approach to multiple testing}, J. R. Stat. Soc. B.
  \textbf{57} (1995), no.~1, 289--300.

\bibitem{abs2}
A.~Bhattacharya, \emph{Statistical analysis on manifolds: a nonparametric
  approach for inference on shape spaces}, Sankhya Ser. A \textbf{70} (2008),
  1--43.

\bibitem{absrabi1}
A.~Bhattacharya and R.N. Bhattacharya, \emph{Statistics on {R}iemannian
  manifolds: asymptotic distribution and curvature}, Proc. Amer. Math. Soc.
  \textbf{136} (2008), 2957--2967.

\bibitem{rabibook}
\bysame, \emph{{Nonparametric Inference on Manifolds: with Applications to
  Shape Spaces}}, IMS monograph series, \# 2, Cambridge University Press, 2012.

\bibitem{2013arXiv1306.5806B}
R.~{Bhattacharya} and L.~{Lin}, \emph{A central limit theorem for {F}r\'echet
  means}, ArXiv1306.5806 (2013).

\bibitem{rabinotes}
R.~N. Bhattacharya, \emph{Smoothness and convexity of the {F}r\'echet function
  on a {R}iemannian manifold, uniqueness of the intrinsic mean, and
  nonsingularity of the asymptotic dispersion of the sample {F}r\'echet mean},
  Unpublished manuscipt (2007).

\bibitem{rabietall11}
R.~N. Bhattacharya, M.~Buibas, I.~L. Dryden, L.~A. Ellingson, D.~Groisser,
  H.~Hendriks, S.~Huckemann, Huiling Le, X.~Liu, J.~S. Marron, D.~E. Osborne,
  V.~Patrangenaru, A.~Schwartzman, H.~W. Thompson, and A.T.A. Wood,
  \emph{Extrinsic data analysis on sample spaces with a manifold
  stratification}, Invited Contributions at the Seventh Congress of Romanian
  Mathematicians, Brasov, Romania, 2011 (2012), 148--156.

\bibitem{rabivic03}
R.~N. Bhattacharya and V.~Patrangenaru, \emph{Large sample theory of intrinsic
  and extrinsic sample means on manifolds-{I}}, Ann. Statist. \textbf{31}
  (2003), 1--29.

\bibitem{rabivic05}
\bysame, \emph{Large sample theory of intrinsic and extrinsic sample means on
  manifolds-{II}}, Ann. Statist. \textbf{33} (2005), 1225--1259.

\bibitem{Billera2011}
L.J. Billera, S.~Holmes, and K.~Vogtmann, \emph{Geometry of the space of
  phylogenetic trees.}, Adv. Appl. Math. \textbf{27} (2001), 733--767.

\bibitem{docarmo}
M.~Do~Carmo, \emph{{Riemannian Geometry}}, Birkh\"auser, Boston, 1992.

\bibitem{dryden2009}
I.~L. Dryden, A.~Koloydenko, and D.~Zhou, \emph{Non-euclidean statistics for
  covariance matrices, with applications to diffusion tensor imaging}, Ann.
  Appl. Stat. \textbf{3} (2009), no.~3, 1102--1123.

\bibitem{dryen2}
I.L. Dryden, A.~Kume, H.~Le, and A.~T.A. Wood, \emph{A multi-dimensional
  scaling approach to shape analysis}, Biometrika \textbf{95 (4)} (2008),
  779--798.

\bibitem{Ellingson2013}
L.~Ellingson, V.~Patrangenaru, and F.~Ruymgaart, \emph{Nonparametric estimation
  of means on hilbert manifolds and extrinsic analysis of mean shapes of
  contours}, J. of Multivariate Anal. \textbf{122} (2013), 317 --333.

\bibitem{ref-Felsen}
J.~Felsenstein, \emph{Evolutionary trees from dna sequences: A maximum
  likelihood approach}, Journal of Molecular Evolution \textbf{17}, no.~6,
  368--376.

\bibitem{frech}
M.~Fr\'echet, \emph{L\'es \'elements al\'eatoires de nature quelconque dans un
  espace distanci\'e}, Ann. Inst. H. Poincar\'e \textbf{10} (1948), 215--310.

\bibitem{gl}
S.~Gallot, D.~Hulin, and J.~Lafontaine, \emph{{R}iemannian {G}eometry},
  Universitext. Springer Verlag, Berlin, 1990.

\bibitem{holmes05}
S.~Holmes, \emph{Statistical approach to tests involving phylogenetics}, in
  Mathematics of Evolution and Phylogeny (Gascuel, O. editor), OUP Oxford
  (2005).

\bibitem{huckman_circle}
T.~Hotz and S.~Huckemann, \emph{Intrinsic means on the circle: uniqueness,
  locus and asymptotics}, Annals of the Institute of Statistical Mathematics
  \textbf{67} (2015), no.~1, 177--193.

\bibitem{openbook13}
T.~Hotz, S.~Huckemann, H.~Le, J.S. Marron, J.C. Mattingly, E.~Miller, J.~Nolen,
  M.~Owen, V.~Patrangenaru, and S.~Skwerer, \emph{Sticky central limit theorems
  on open books.}, Advances in Appl. Probab. \textbf{23} (2013), 2238--2258.

\bibitem{hongtu15}
C.~Huang, M.~Styner, and H.T. Zhu, \emph{Penalized mixtures of offset-normal
  shape factor analyzers with application in clustering high-dimensional shape
  data}, J. Amer. Statist. Assoc., to appear (2015).

\bibitem{EJP3887}
S.~Huckemann, J.~Mattingly, E.~Miller, and J.~Nolen, \emph{Sticky central limit
  theorems at isolated hyperbolic planar singularities}, Electron. J. Probab.
  \textbf{20} (2015), no. 78, 1--34.

\bibitem{2014arXiv1406.3361J}
S.~{Jung} and A.~{Schwartzman}, \emph{{Scaling-rotation distance and
  interpolation of symmetric positive-definite matrices}}, ArXiv e-prints
  (2014).

\bibitem{karcher}
H.~Karcher, \emph{Riemannian center of mass and mollifier smoothing}, Comm.
  Pure Appl. Math. \textbf{30} (1977), 509--554.

\bibitem{kenws}
W.S Kendall, \emph{Probability, convexity, and harmonic maps with small image
  {I}: uniqueness and fine existence}, Proc. London Math. Soc \textbf{61}
  (1990), 371--406.

\bibitem{kendall2011}
W.S. Kendall and H.~Le, \emph{Limit theorems for empirical {F}r\'echet means of
  independent and non-identically distributed manifold-valued random
  variables.}, Braz. J. Prob. Stat. \textbf{25} (2011), 323--352.

\bibitem{affine-invariate}
C.~Lenglet, R.~Rousson, M.and~Deriche, and O.~Faugeras, \emph{Statistics on the
  manifold of multivariate normal distributions: theory and application to
  diffusion tensor {MRI} processing}, J. Math. Imaging Vis. \textbf{25} (2006),
  no.~3, 423--444.

\bibitem{mckilliam2012}
R.G. McKilliam, B.G. Quinn, and I.V.L. Clarkson, \emph{Direction estimation by
  minimum squared arc length}, IEEE Transactions on Signal Processing
  \textbf{60} (2012), no.~5, 2115--2124.

\bibitem{milnor1997topology}
J.W. Milnor, \emph{Topology from the {D}ifferentiable {V}iewpoint}, Princeton
  Landmarks in Mathematics, Princeton University Press, 1997.

\bibitem{Osborne2013}
D.~Osborne, V.~Patrangenaru, L.~Ellingson, D.~Groisser, and A.~Schwartzman,
  \emph{Nonparametric two-sample tests on homogeneous {R}iemannian manifolds,
  {C}holesky decompositions and {D}iffusion {T}ensor {I}mage analysis}, J. of
  Multivariate Anal. \textbf{119} (2013), 163 -- 175.

\bibitem{hilbert-mani}
D.~Osborne, V.~Patrangenaru, M.~Qiu, and H.~W. Thompson, \emph{Nonparametric
  data analysis methods in medical imaging}, pp.~182--205, John Wiley \& Sons,
  Ltd, 2015.

\bibitem{vicbook}
V.~Patrangenaru and L.~Ellingson, \emph{Nonparametric statistics on manifolds
  and their applications}, Texts in Statistical Science, Chapman \& Hall/CRC,
  2015.

\bibitem{ADHDtype}
J.~R. Ramsay, \emph{Current status of cognitive-behavioral therapy as a
  psychosocial treatment for adult attention-deficit/hyperactivity disorder.},
  Curr Psychiatry Rep. \textbf{9(5)} (2007), 427--433.

\bibitem{2014arXiv1407.6383S}
A.~{Schwartzman}, \emph{Lognormal distributions and geometric averages of
  positive definite matrices}, ArXiv e-prints (2014).

\bibitem{stu03}
K.T. Sturm, \emph{Probability measures on metric spaces of nonpositive
  curvature}, In ”Heat Kernels and Analysis on Manifolds, Graphs, and Metric
  Spaces” (edited by P. Auscher, T. Coulhon, A. Grigor’yan) (2003),
  357--390, Contemporary Mathematics 338 AMS 2003.

\bibitem{bahr}
B.~von Bahr, \emph{On the central limit theorem in $\mathbb{R}_k$}, Arkiv f\"or
  Matematik \textbf{7 (1)} (1967), 61--69.

\bibitem{Yuan2012}
Y.~Yuan, H.~Zhu, W.~Lin, and J.~S. Marron, \emph{Local polynomial regression
  for symmetric positive definite matrices}, J. R. Stat. Soc. B. \textbf{74(4)}
  (2012), 697--719.

\bibitem{ziezold}
H.~Ziezold, \emph{On expected figures and a strong law of large numbers for
  random elements in quasi-metric spaces}, Transactions of the Seventh Pragure
  Conference on Information Theory, Statistical Functions, Random Processes and
  of the Eightth European Meeting of Statisticians \textbf{A} (1977), 591--602,
  (Tech. Univ. Prague, Prague, 1974).

\end{thebibliography}

\end{document}